\newtheorem{Theorem}{Theorem}[section]
\newtheorem{Lemma}[Theorem]{Lemma}
\newtheorem{Corollary}[Theorem]{Corollary}
\newtheorem{Proposition}[Theorem]{Proposition}
\newtheorem{Remark}[Theorem]{Remark}
\begin{document}
\title{Distinguishing Number of Non-Zero Component Graphs}
\author{I. Javaid$^*$, M. Murtaza, H. Benish}
%\subjclass{Primary: , Secondary: }
\keywords{Automorphism, distinguishing labeling, distinguishing number. \\
\indent 2010 {\it Mathematics Subject Classification.} 05C25\\
\indent $^*$ Corresponding author: ijavaidbzu@gmail.com}
%\indent 2012 {\it Mathematics Subject Classification.} \\
\address{Centre for advanced studies in Pure and Applied Mathematics,
Bahauddin Zakariya University Multan, Pakistan\newline Email:
ijavaidbzu@gmail.com, mahru830@gmail.com, hira\_benish@yahoo.com.}

%azeemhaider@bzu.edu.pk
%{imranjavaid45@gmail.com}{\rm }

\date{}
\maketitle

\begin{abstract}
A non-zero component graph $G(\mathbb{V})$ associated to a finite vector space $\mathbb{V}$ is a graph whose vertices are non-zero vectors of $\mathbb{V}$ and two vertices are adjacent, if their corresponding vectors have at least one non-zero component common in their linear combination of basis vectors. In this paper, we extend the study of properties of automorphisms of non-zero component graphs. We prove that every permutation of basis vectors can be extended to an automorphism of $G(\mathbb{V})$. We prove that the symmetric group of basis vectors of $\mathbb{V}$ is isomorphic to the automorphism group of $G(\mathbb{V})$. We find the distinguishing number of the graph for both of the cases, when the number of field elements of vector space $\mathbb{V}$ are 2 or more than 2.
\end{abstract}

\section{Preliminaries}
The association of graphs with algebraic structures has become an interesting
research topic for the past few decades. See for instance: the study
of zero-divisor graphs of commutative rings with unity was initiated by Beck \cite{Beck} to discuss coloring problem. Commuting graphs associated to symmetric groups were studied by Bondy \emph{et al.} \cite{bates1,bates2}, where the authors  discussed connectivity and related properties of these graphs. Power graphs for groups and semigroups were discussed in
\cite{Cameron,Chakrabarty,Moghaddamfar}. Intersection graphs associated to vector spaces were studied in \cite{Rad,Talebi}.
Das assigned non-zero component graphs to finite dimensional vector
spaces in \cite{Das}. The author also studied its domination number and independence
number. In \cite{Das1}, the author discussed edge-connectivity and
the chromatic number of the graph. Non-zero component graphs have very interesting symmetrical structures especially for the case where the number of field elements of vector space is 2. Several authors studied non-zero component graphs for different graph parameters. For instance, metric dimension and partition
dimension of non-zero component graphs are studied in \cite{us}.
Fixing number of the graph is studied in \cite{fa}. Locating-dominating sets and identifying codes of non-zero
component graphs are discussed by Murtaza {\it et al.} \cite{Murtaza}. Automorphisms of non-zero component graphs are studied by Hira {\it et al.} in \cite{HB1}, where the authors studied the fixing neighborhoods of pairs of vertices of the graph and the fixed number of the graph. In this paper, we extend the study of properties of automorphisms non-zero component graphs. We find the cardinality of the automorphism group of the graph, distinguishing number of the graph and prove that the automorphism group of the graph is destroyed by only 2 colors for the case where the number of field elements are 2.

How many colors are needed to identify apparently same keys of a key ring? The first time this question was discussed by Rubin \cite{Rubin} in 1980. The problem was transformed into other problems like scheduling meetings, storing chemicals and solution to these problems need proper coloring and one with a small number of colors. Motivated by this problem, Albertson, and Collins
\cite{Alb} introduced the concept of the distinguishing number of a graph as follows: A labeling $f: V(G)\rightarrow \{1,2,3,...,t\}$ is
called a $t$-\emph{distinguishing} if no non-trivial automorphism of
a graph $G$ preserves the vertex labels. The \emph{distinguishing
number} of a graph $G$, denoted by $Dist(G)$, is the least integer
$t$ such that $G$ has $t$-distinguishing labeling.
The distinguishing number of a complete graph $K_n$ is $n$, the
distinguishing number of a path graph $P_n$ is $2$ and the distinguishing
number of a cyclic $C_{n},\ n\geq 6$ is $2$. For a graph $G$ of order
$n$, $1\le Dist(G)\le n$ \cite{Alb}.

Now, we define some graph related terminology which is used in this
article: Let $G$ be a graph with the vertex set $V(G)$ and the edge
set $E(G)$. Two vertices $u$ and $v$ are \emph{adjacent}, if they
share an edge, otherwise they are called \emph{non-adjacent}. The
number of adjacent vertices of a vertex $v$ is called the \emph{degree} of $v$ in
$G$. For a graph $G$, an {\it automorphism} of $G$ is a bijective
mapping $f$ on $V(G)$ such that $f(u)f(v)\in E(G)$ if and only if
$uv\in E(G)$. The set of all automorphisms of $G$ forms a group,
denoted by $\Gamma(G)$, under the operation of composition. For a
vertex $v$ of $G$, the set $\{f(v):f\in \Gamma(G)\}$ is the {\it
orbit} of $v$, denoted by $\mathcal{O}(v)$. If two vertices $u,v$
are mapped on each other under the action of an automorphism $g\in
\Gamma(G)$, then we write it $u\sim^gv$. If $u,v$ cannot be mapped on
each other, then we write $u\nsim^gv$. An automorphism $g\in
\Gamma(G)$ is said to \emph{fix} a vertex $v\in V(G)$ if $v\sim^g
v$. The {\it stabilizer} of a vertex $v$ is the set of all
automorphisms that fix $v$ and it is denoted by $\Gamma_v(G)$. Also,
$\Gamma_v(G)$ is a subgroup of $\Gamma(G)$. Let us consider sets
$S(G)=\{v\in V(G):$ $|\mathcal{O}(v)|\ge 2 \}$ and
$V_s(G)=\{(u,v)\in S(G)\times S(G):$ $u\neq v$ and
$\mathcal{O}(u)=\mathcal{O}(v)\}$. If $G$ is a rigid graph (i.e., a
graph with $\Gamma (G)={id}$), then $V_s(G)=\emptyset$. For $v\in
V(G)$, the subgroup $\Gamma_v(G)$ has a natural action on $V(G)$ and
the orbit of $u$ under this action is denoted by $\mathcal{O}_v(u)$
i.e., $\mathcal{O}_v(u)=\{g(u):g\in\Gamma_v(G)\}$. An automorphism
$g\in \Gamma(G)$ is said to {\it fix} a set $D\subseteq V(G)$ if for
all $v\in D$, $v\sim^g v$. The set of automorphisms that fix $D$,
denoted by $\Gamma_D(G)$, is a subgroup of $\Gamma(G)$ and
$\Gamma_D(G)=\cap_{v\in D}\Gamma_v(G)$.

Throughout the paper, $\mathbb{V}$ denotes a vector space of
dimension $n$ over the field of $q$ elements and
$\{b_1,b_2,...,b_n\}$ be a basis of $\mathbb{V}$. The \emph{non-zero
component graph} of $\mathbb{V}$ \cite{Das}, denoted by $G(\mathbb{V})$, is a
graph whose vertex set consists of non-zero vectors of
$\mathbb{V}$ and two vertices are joined by an edge if they share at
least one $b_{i}$ with non-zero coefficient in their unique linear
combination with respect to basis vectors $\{b_{1}, b_{2},\ldots,b_{n}\}$. It is proved in \cite{Das} that $G(\mathbb{V})$ is
independent of the choice of basis, i.e., isomorphic non-zero
component graphs are obtained for two different bases. In
\cite{Das}, Das studied automorphisms of $G(\mathbb{V})$. It was
shown that an automorphism maps basis of $G(\mathbb{V})$ to a basis
of a special type, namely non-zero scalar multiples of a permutation
of basis vectors.
\begin{Theorem}\cite{Das}
Let $\varphi:G(\mathbb{V})\rightarrow G(\mathbb{V})$ be a graph
automorphism. Then, $\varphi$ maps a basis $\{\alpha_1,
\alpha_2,...,\alpha_n\}$ of $\mathbb{V}$ to another basis $\{\beta_1,
\beta_2,...,\beta_n\}$ such that there exists a permutation $\sigma$
from the symmetric group on $n$ elements, where each $\beta_i$ is of the
form $c_i\alpha_{\sigma(i)}$ and each $c_i$'s are non-zero.
\end{Theorem}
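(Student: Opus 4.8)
The plan is to recognise the nonzero scalar multiples of the basis vectors as a graph‑theoretic invariant, and then to read off the permutation $\sigma$ from the subgraph they induce. Throughout, working with the basis $\{\alpha_1,\dots,\alpha_n\}$ with respect to which $G(\mathbb{V})$ is built, I write $\mathrm{supp}(v)=\{i:a_i\neq 0\}$ for a nonzero vector $v=\sum_{i=1}^{n}a_i\alpha_i$; by definition of $G(\mathbb{V})$, two vertices $u,v$ are adjacent exactly when $\mathrm{supp}(u)\cap\mathrm{supp}(v)\neq\emptyset$, so the graph is completely determined by the supports. First I would compute the degree of a vertex $v$ with $|\mathrm{supp}(v)|=s$: its non‑neighbours are precisely the nonzero vectors supported on the complementary $n-s$ coordinates, of which there are $q^{\,n-s}-1$, so $\deg v=(q^n-1)-(q^{\,n-s}-1)-1=q^n-q^{\,n-s}-1$.

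The key point is that $s\mapsto q^n-q^{\,n-s}-1$ is strictly increasing on $\{1,\dots,n\}$. Hence an automorphism $\varphi$, which preserves degrees, must leave each class $L_s=\{v:|\mathrm{supp}(v)|=s\}$ invariant; in particular it fixes setwise the set $L_1$ of minimum‑degree vertices, and $L_1=\{c\alpha_i:1\le i\le n,\ c\in\mathbb{F}_q^{*}\}$ is exactly the set of nonzero scalar multiples of the $\alpha_i$. Next I would look at the subgraph induced on $L_1$: two singleton‑support vectors $c\alpha_i$ and $c'\alpha_j$ are adjacent iff $i=j$, so $G(\mathbb{V})[L_1]$ is a disjoint union of $n$ cliques $C_i=\{c\alpha_i:c\in\mathbb{F}_q^{*}\}$, each of order $q-1$ (for $q=2$ these are just $n$ isolated vertices). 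Since $\varphi(L_1)=L_1$ and $\varphi$ respects edges and non‑edges, $\varphi|_{L_1}$ is an automorphism of $G(\mathbb{V})[L_1]$, and therefore permutes its connected components $C_1,\dots,C_n$; this defines $\sigma\in S_n$ by $\varphi(C_i)=C_{\sigma(i)}$. Consequently $\varphi(\alpha_i)\in C_{\sigma(i)}$, i.e. $\varphi(\alpha_i)=c_i\alpha_{\sigma(i)}$ with $c_i\neq 0$, and $\{\beta_i:=\varphi(\alpha_i)\}=\{c_i\alpha_{\sigma(i)}\}$ is again a basis, being a nonzero rescaling of a permuted basis.

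The step I expect to be the main obstacle is the bridge between these two halves: one must be sure that $L_1$ is exactly the set of minimum‑degree vertices, and that the connected components of $G(\mathbb{V})[L_1]$ are precisely the cliques $C_i$. The first is where strict monotonicity of the degree formula is essential — there are no accidental degree coincidences because the numbers $q^{\,n-s}$ are pairwise distinct — and the second is what makes the induced action on components, and hence $\sigma$, well defined. The remaining small cases are immediate: if $n=1$ there is a single component and $\sigma$ is the identity, and if $q=2$ each $C_i$ is a single vertex, forcing every $c_i=1$, so that $\varphi$ acts as the pure permutation $\sigma$ of $\{\alpha_1,\dots,\alpha_n\}$.
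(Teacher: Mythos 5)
Your proof is correct. The paper itself only quotes this theorem from Das's work without reproducing a proof, but your argument is a sound, self-contained derivation along the same lines as the source: the degree formula $\deg v=q^n-q^{\,n-s}-1$ (the $q=2$ case of which appears in the paper as Lemma~\ref{Murtaza}) is strictly increasing in the support size $s$, so $L_1$ is the set of minimum-degree vertices and is preserved setwise, and reading $\sigma$ off the permutation of the $n$ cliques $C_i$ of $G(\mathbb{V})[L_1]$ correctly yields $\varphi(\alpha_i)=c_i\alpha_{\sigma(i)}$ with $c_i\neq 0$.
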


The \emph{skeleton} of a vertex $u\in V(G(\mathbb{V}))$ denoted by
$S_u$, is the set of all those basis vectors of $\mathbb{V}$ which
have non-zero coefficients in the representation of $u$ as the
linear combination of basis vectors. In \cite{Murtaza}, we partition
the vertex set of $G(\mathbb{V})$ into $n$ classes $T_i$, $(1\le i
\le n)$, where $T_i=\{v\in \mathbb{V}: |S_v|=i\}$. For example, if
$n=4$ and $q=2$, then $T_3=\{b_1+b_2+b_3,b_1+b_2+b_4,b_1+b_3+b_4,b_2+b_3+b_4\}$.

The section wise break up of the article is as follows: In Section \ref{ANZG}, we study the properties of automorphisms of non-zero component graph. We discuss the relation between vertices and their images under an automorphism of $G(\mathbb{V})$ in the terms of their skeletons. We prove that every permutation of basis vectors can be extended to an automorphism of $G(\mathbb{V})$. We also prove that the symmetric group of basis vectors is isomorphic to the automorphism group of $G(\mathbb{V})$. We find the cardinality of automorphism group of $G(\mathbb{V})$. In Section \ref{DLNG}, we study distinguishing labelings and distinguishing number of non-zero component graphs for both of the cases, when the number of field elements of vector space $\mathbb{V}$ are 2 or more than 2.

\section{Automorphisms of non-zero component graph}\label{ANZG}
In \cite{HB1}, we study the properties of automorphisms of $G(\mathbb{V})$ of a vector space $\mathbb{V}$ of dimension $n\geq 3$ over the field of $2$ elements. Graphs $G(\mathbb{V})$ have more interesting symmetrical structures for the case where the number of field elements are 2, as compared to the cases where the number of field elements are more than 2, as $G(\mathbb{V})$ does not have twin vertices in the former case. In this section, we extend the study of the properties of automorphisms of $G(\mathbb{V})$, where $\mathbb{V}$ is a vector space of dimension $n$ over the field of 2 elements.

\begin{Lemma}\label{Murtaza}\cite{Murtaza}
If $v\in T_s$ for $s$ $(1\le s \le n)$, then
$deg(v)=(2^s-1)2^{n-s}-1$.
\end{Lemma}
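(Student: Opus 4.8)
The plan is to count the neighbours of $v$ directly by a complementary-counting argument, using the fact that over the field of $2$ elements a vertex is determined by its skeleton up to the choice of nonzero coefficients, which here are forced to be $1$. Fix $v\in T_s$, so $|S_v|=s$. Since $q=2$, two vertices $u$ and $w$ are adjacent precisely when $S_u\cap S_w\neq\emptyset$; hence the neighbours of $v$ are exactly the nonzero vectors $u$ with $u\neq v$ and $S_u\cap S_v\neq\emptyset$.

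First I would count all nonzero vectors of $\mathbb{V}$: there are $2^n-1$ of them. Next I would count those nonzero vectors $u$ whose skeleton is \emph{disjoint} from $S_v$; such a $u$ must be a linear combination of the $n-s$ basis vectors lying outside $S_v$, and conversely every nonzero such combination qualifies, so there are exactly $2^{n-s}-1$ of them. Subtracting, the number of nonzero vectors $u$ with $S_u\cap S_v\neq\emptyset$ is $(2^n-1)-(2^{n-s}-1)=2^n-2^{n-s}$.

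Finally, this last count includes $v$ itself, because $S_v\cap S_v=S_v\neq\emptyset$; since $G(\mathbb{V})$ has no loops we must discard it. Therefore $deg(v)=2^n-2^{n-s}-1$, and factoring $2^{n-s}$ out of the first two terms gives $deg(v)=(2^s-1)2^{n-s}-1$, as claimed.

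There is no serious obstacle here; the only points requiring a little care are the bookkeeping of whether the zero vector and $v$ itself are being counted, and the translation of adjacency into the condition $S_u\cap S_v\neq\emptyset$, which is exactly where the hypothesis $q=2$ is used (for larger $q$ one would also have to account for the scalar multiples, which is why this clean formula is special to the two-element field).
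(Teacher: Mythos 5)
Your proof is correct and is essentially the standard counting argument: the paper itself cites this lemma from \cite{Murtaza} without reproducing a proof, and the count there is the same computation, namely that the neighbours of $v$ are the $(2^s-1)2^{n-s}$ nonzero vectors sharing a basis element with $S_v$ (which you obtain by complementary counting as $(2^n-1)-(2^{n-s}-1)$), minus $v$ itself. One small quibble with your closing remark: the equivalence of adjacency with $S_u\cap S_w\neq\emptyset$ holds for every $q$ by the definition of the graph; the hypothesis $q=2$ is used instead in the step where you count vectors by their skeletons (each nonempty subset of the basis corresponding to exactly one vector), which is why the totals $2^n-1$ and $2^{n-s}-1$ come out as they do.
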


\begin{Lemma}\label{VertexDeg}\cite{HB1}
Let $u,v\in V(G(\mathbb{V}))$ such that $u\in T_r$ and $v\in T_s$
where $r\ne s$ and $1\le r,s \le n$, then $u\not\sim^g v$ for all
$g\in \Gamma(G(\mathbb{V}))$.
\end{Lemma}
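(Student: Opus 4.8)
The plan is to exploit the fact that any graph automorphism preserves vertex degrees, so it suffices to show that vertices lying in distinct classes $T_r$ and $T_s$ have distinct degrees. First I would invoke Lemma~\ref{Murtaza}: for $v\in T_s$ we have $\deg(v)=(2^s-1)2^{n-s}-1$. The key observation is that this expression simplifies to $\deg(v)=2^n-2^{n-s}-1$, which is a strictly increasing function of $s$ on $\{1,2,\dots,n\}$, since $2^{n-s}$ strictly decreases as $s$ grows. Hence $s\mapsto\deg(v)$ is injective on the set of class indices.

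Next I would argue by contraposition. Suppose $u\in T_r$, $v\in T_s$ with $r\ne s$, and suppose for contradiction that $u\sim^g v$ for some $g\in\Gamma(G(\mathbb{V}))$. Since $g$ is an automorphism, $\deg(u)=\deg(g(u))=\deg(v)$, i.e. $2^n-2^{n-r}-1=2^n-2^{n-s}-1$, which forces $2^{n-r}=2^{n-s}$ and therefore $r=s$, contradicting $r\ne s$. This establishes the claim that $u\not\sim^g v$ for every $g\in\Gamma(G(\mathbb{V}))$.

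I do not anticipate a genuine obstacle here; the only content is the elementary monotonicity of the degree formula, and the rest is the standard degree-preservation property of automorphisms. The one point worth stating carefully is the algebraic simplification $(2^s-1)2^{n-s}-1=2^n-2^{n-s}-1$, after which strict monotonicity in $s$ is immediate. If one wished to avoid even that simplification, one could instead observe directly that $(2^s-1)2^{n-s}=2^n-2^{n-s}$ is increasing in $s$ because successive differences are positive, but the closed form is cleaner and makes the injectivity transparent.
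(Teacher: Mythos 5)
Your proof is correct: the simplification $(2^s-1)2^{n-s}-1=2^n-2^{n-s}-1$ is right, the resulting strict monotonicity in $s$ is immediate, and degree preservation under automorphisms finishes the argument. The paper itself states this lemma as a citation from \cite{HB1} without reproducing a proof, but the degree-based route you take is evidently the intended one --- the degree formula of Lemma~\ref{Murtaza} is placed immediately before this lemma precisely to enable it --- so there is nothing to add beyond noting that your ``by contraposition'' framing is really just a direct proof by contradiction.
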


From Lemma \ref{VertexDeg}, we have the following straightforward
remark.

\begin{Remark}\cite{HB1}
Since $u\in T_n$ where $S_u=\{b_1,b_2,...,b_n\}$ is the only element in
$T_n$. Therefore, by Lemma \ref{VertexDeg}, $g(u)=u$ for all $g\in \Gamma(G(\mathbb{V}))$.
\end{Remark}

\begin{Lemma}\cite{HB1}
Let $b_l\in T_1$ be a basis vector and $g\in \Gamma_{b_l}$. Let
$u\in V(G(\mathbb{V}))$, then $b_l\in S_u$ if and only if $b_l\in
S_{g(u)}$.
\end{Lemma}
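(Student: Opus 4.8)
The plan is to reformulate the skeleton condition ``$b_l \in S_u$'' as a purely graph-theoretic condition on the closed neighborhood of $b_l$, and then to use the fact that an automorphism fixing $b_l$ necessarily preserves that closed neighborhood. So the whole argument reduces to two short observations and one application of the automorphism property.

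First I would record the basic translation. Since we work over the field of two elements, $T_1 = \{b_1, \dots, b_n\}$ and $S_{b_l} = \{b_l\}$. By the defining adjacency rule of $G(\mathbb{V})$, a vertex $u \ne b_l$ satisfies $u \sim b_l$ exactly when $S_u \cap S_{b_l} \ne \emptyset$, i.e. exactly when $b_l \in S_u$; adjoining the trivial case $u = b_l$ gives the equivalence
\[
b_l \in S_u \quad \Longleftrightarrow \quad u \in N[b_l],
\]
where $N[b_l]$ denotes the closed neighborhood of $b_l$ in $G(\mathbb{V})$ (the vertex $b_l$ together with all its neighbors).

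Next, since $g \in \Gamma_{b_l}$ we have $g(b_l) = b_l$, and $g$ is a bijection of $V(G(\mathbb{V}))$ preserving adjacency, so $g$ maps $N[b_l]$ bijectively onto $N[g(b_l)] = N[b_l]$. Hence $u \in N[b_l]$ if and only if $g(u) \in N[b_l]$. Applying the equivalence of the previous paragraph both to $u$ and to $g(u)$, and chaining, we obtain
\[
b_l \in S_u \iff u \in N[b_l] \iff g(u) \in N[b_l] \iff b_l \in S_{g(u)},
\]
which is exactly the asserted statement.

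There is no genuinely hard step here: the only points needing care are that one must use the \emph{closed} neighborhood so as to absorb the case $u = b_l$, and the elementary remark that over the two-element field the skeleton of the basis vector $b_l$ is the singleton $\{b_l\}$, so that ``$u$ is joined to $b_l$'' already detects membership of $b_l$ in $S_u$. Once these are in place, invariance under $g$ is immediate from $g(b_l) = b_l$. (Alternatively one could deduce the same conclusion from the structural description of automorphisms of $G(\mathbb{V})$ recalled earlier, but the neighborhood argument is shorter and self-contained.)
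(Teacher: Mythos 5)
Your proof is correct. Note that the paper itself gives no proof of this lemma---it is quoted from reference \cite{HB1}---so there is no in-paper argument to compare against; your reduction of ``$b_l \in S_u$'' to the closed-neighborhood condition $u \in N[b_l]$, followed by the observation that an automorphism fixing $b_l$ preserves $N[b_l]$ setwise, is a clean and self-contained derivation (and the key fact $S_{b_l}=\{b_l\}$ holds for the basis vector $b_l$ over any field, so the appeal to $q=2$ is not even needed for that step).
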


\begin{Lemma}\label{skelotondiff}\cite{HB1}
Let $u,v\in T_i$ for some $i$ $(1\le i \le n-1)$ and $u\sim^g v$ for
some $g\in \Gamma(G(\mathbb{V}))$. The following statements hold:
\begin{enumerate}[label=(\roman*)]
  \item If $b\in S_u \cap S_v$, then $g(b)\in S_u \cap S_v$.
  \item If $b\in S_u-S_v$, then $g(b)\in S_v-S_u$.
\end{enumerate}
\end{Lemma}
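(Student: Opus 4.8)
The plan is to first pin down precisely how an arbitrary automorphism $g$ of $G(\mathbb{V})$ moves skeletons, and then read off (i) and (ii) as short formal consequences. Since the field has exactly two elements, the non-zero scalars appearing in Das's Theorem are all equal to $1$, so that theorem simply says $g$ permutes the basis vectors: there is a permutation $\sigma$ of $\{1,\dots,n\}$ with $g(b_j)=b_{\sigma(j)}$ for every $j$. Writing $\sigma(S)=\{b_{\sigma(j)}:b_j\in S\}$ for a set $S$ of basis vectors, the core of the argument is the identity
\[
S_{g(w)}=\sigma(S_w)\qquad\text{for every vertex }w\in V(G(\mathbb{V})).
\]

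To establish this I would view each basis vector $b_j$ as the vertex of $G(\mathbb{V})$ lying in $T_1$ that it is, and note that for a vertex $w$ with $|S_w|\ge 2$ the vertices of $T_1$ adjacent to $w$ are precisely the basis vectors belonging to $S_w$, while $w$ itself is not in $T_1$. By Lemma~\ref{Murtaza} one has $\deg(v)=2^{n}-2^{n-s}-1$ for $v\in T_s$, which is strictly increasing in $s$, so every automorphism preserves each class $T_s$ setwise (compare Lemma~\ref{VertexDeg}); hence $g$ carries the set of $T_1$-neighbours of $w$ bijectively onto the set of $T_1$-neighbours of $g(w)$, which by the same description equals $S_{g(w)}$. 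Applying $g(b_j)=b_{\sigma(j)}$ identifies that set as $\sigma(S_w)$, which proves the identity when $|S_w|\ge 2$; the case $|S_w|=1$ is immediate. I expect this identity to be the only genuine obstacle: it forces a purely graph-theoretic automorphism to behave ``linearly'' at the level of skeletons, and extracting that fact from Das's Theorem together with the degree formula is the one place where real structure is used; once it is in hand the rest is bookkeeping.

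With the identity available, recall that $u\sim^g v$ yields both $g(u)=v$ and $g(v)=u$, so $\sigma(S_u)=S_v$ and $\sigma(S_v)=S_u$. For (i), if $b\in S_u\cap S_v$ then $g(b)\in\sigma(S_u)=S_v$ because $b\in S_u$, and $g(b)\in\sigma(S_v)=S_u$ because $b\in S_v$, hence $g(b)\in S_u\cap S_v$. For (ii), if $b\in S_u-S_v$ then still $g(b)\in\sigma(S_u)=S_v$; and if we had $g(b)\in S_u=\sigma(S_v)$ as well, injectivity of $\sigma$ on basis vectors would force $b\in S_v$, contradicting $b\notin S_v$, so $g(b)\notin S_u$ and therefore $g(b)\in S_v-S_u$. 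The only computation needed anywhere is the monotonicity of $2^{n}-2^{n-s}-1$ in $s$, which is what lets the classes $T_s$ be separated by degree.
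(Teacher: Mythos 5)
Your proof is correct. Note first that the paper itself does not prove this lemma: it is quoted from \cite{HB1} as a known result, so there is no in-paper argument to compare against. Judged on its own, your argument is sound and self-contained given the surrounding results: the identification of the $T_1$-neighbourhood of a vertex $w$ with $S_w$ (for $|S_w|\ge 2$) is right, the degree formula $(2^s-1)2^{n-s}-1=2^n-2^{n-s}-1$ is indeed strictly increasing in $s$ so each $T_s$ is preserved setwise, and with $q=2$ Das's theorem forces $g$ to permute the basis vectors, giving $S_{g(w)}=\sigma(S_w)$; parts (i) and (ii) then follow exactly as you say. Two remarks. First, you correctly read $u\sim^g v$ as $g$ interchanging $u$ and $v$ (so that both $\sigma(S_u)=S_v$ and $\sigma(S_v)=S_u$ are available); this is essential, since with only $g(u)=v$ statement (i) fails (e.g.\ $n=3$, $\sigma=(1\,2\,3)$, $u=b_1+b_2$, $v=b_2+b_3$, $b=b_2$). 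Second, your central identity $S_{g(w)}=\sigma(S_w)$ is really the stronger structural fact that every automorphism acts on skeletons through a permutation of the basis --- essentially the content of Corollary \ref{AutoPerm} and Theorem \ref{PermAutoBijection} later in the paper --- so your route proves more than the lemma asks for, and the lemma drops out as a two-line corollary; that is a perfectly legitimate, arguably cleaner, organization of the material.
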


\begin{Lemma}\label{LemTransOnlyNghbr}\cite{HB1}
Let $b_l,b_m\in T_1$ be two distinct basis vectors of $\mathbb{V}$
and $g\in \Gamma(G(\mathbb{V}))$ be an automorphism such that
$b_l\sim^g b_m$. Let $u\in V(G(\mathbb{V}))$, then we have:
\begin{enumerate}[label=(\roman*)]
  \item If $b_l\in S_u$ and $b_m\not\in S_u$, then
$b_l\not\in S_{g(u)}$ and $b_m\in S_{g(u)}$.
  \item $b_l,b_m\in S_u$ if and only if
$b_l,b_m \in S_{g(u)}$.
\end{enumerate}
\end{Lemma}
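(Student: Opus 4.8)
The plan is to translate everything into the language of skeletons. Since the underlying field has two elements, a vertex $w$ of $G(\mathbb{V})$ is completely determined by its skeleton $S_w$, and two distinct vertices are adjacent exactly when their skeletons intersect; in particular a basis vector $b\in T_1$ has $S_b=\{b\}$, so for any vertex $w$ we have the equivalence: $b\in S_w$ if and only if $w=b$ or $w\sim b$. I will also use that $\sim^g$ is a symmetric relation, so the hypothesis $b_l\sim^g b_m$ gives the full transposition $g(b_l)=b_m$ and $g(b_m)=b_l$, hence also $g^{-1}(b_l)=b_m$ and $g^{-1}(b_m)=b_l$; this two-sidedness, together with the fact that $g$ and $g^{-1}$ preserve adjacency (and the classes $T_i$, by Lemma~\ref{VertexDeg}), is the whole engine of the argument.

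For part (i) I would assume $b_l\in S_u$ and $b_m\notin S_u$, and note first that $u\neq b_m$, since otherwise $b_m\in S_{b_m}=S_u$. If $u=b_l$ then $g(u)=b_m$, so $S_{g(u)}=\{b_m\}$ and both conclusions are immediate. If $u\neq b_l$, then $b_l\in S_u$ forces $u\sim b_l$, and applying $g$ gives $g(u)\sim g(b_l)=b_m$, hence $b_m\in S_{g(u)}$. It remains to show $b_l\notin S_{g(u)}$, which I would do by contradiction: if $b_l\in S_{g(u)}$, then either $g(u)=b_l$, forcing $u=g^{-1}(b_l)=b_m$ and contradicting $b_m\notin S_u$; or $g(u)\sim b_l$, and then applying $g^{-1}$ gives $u\sim g^{-1}(b_l)=b_m$, so $b_m\in S_u$, again a contradiction.

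For part (ii), suppose $b_l,b_m\in S_u$. Then $u$ differs from both $b_l$ and $b_m$ (each of those has a singleton skeleton not containing the other vector), so $u\sim b_l$ and $u\sim b_m$; applying $g$ yields $g(u)\sim b_m$ and $g(u)\sim b_l$, hence $b_l,b_m\in S_{g(u)}$. The reverse implication follows by the same argument with $g^{-1}$ in place of $g$ (it interchanges $b_l,b_m$ as well) and $g(u)$ in place of $u$, which gives $b_l,b_m\in S_u$ from $b_l,b_m\in S_{g(u)}$.

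I do not expect a real obstacle; the only points that need attention are the degenerate cases in which $u$ or $g(u)$ coincides with $b_l$ or $b_m$, where the equivalence ``$b\in S_w$ iff $w\sim b$'' breaks down and must be replaced by direct inspection of singleton skeletons, and the essential use of the two-sided swap $g(b_l)=b_m$, $g(b_m)=b_l$: with only the one-sided equality $g(b_l)=b_m$ the statement is actually false (e.g.\ for a $3$-cycle $b_l\mapsto b_m\mapsto b_k\mapsto b_l$ on basis vectors one gets $b_l\in S_{g(u)}$ for $u=b_l+b_k$), so it is crucial that $b_l\sim^g b_m$ encodes the transposition, not merely one image.
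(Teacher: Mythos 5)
Your proof is correct. Note that the paper itself gives no proof of this lemma --- it is quoted from \cite{HB1} --- so there is nothing internal to compare against; but your argument is the natural one and is sound: reducing membership $b\in S_w$ for $b\in T_1$ to ``$w=b$ or $w\sim b$'' (valid since $q=2$ makes a vertex determined by its skeleton and adjacency equivalent to overlapping skeletons), handling the degenerate cases $u=b_l$ and $g(u)=b_l$ separately, and using $g^{-1}$ for the missing direction of (i) and the converse in (ii). Your closing remark is also a genuinely useful observation: the paper's definition of $u\sim^g v$ (``mapped on each other'') must be read as the two-sided swap $g(b_l)=b_m$, $g(b_m)=b_l$, and your $3$-cycle example correctly shows the lemma fails under the one-sided reading.
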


\begin{Lemma}\label{BasisDistinctSkelton}
   Let $b_l,b_m\in T_1$ be two distinct basis vectors, then there exist ${{n-1}\choose{i-1}}-{{n-2}\choose{i-2}}$ pairs of distinct vertices $u,v\in T_i$ for each $i$ $(1\le i \le n-1)$, such that $b_l\in S_u\setminus S_v$ and $b_m\in S_v\setminus S_u$.
\end{Lemma}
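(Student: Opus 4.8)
The plan is to count directly, using the fact that vectors over the field of $2$ elements in $T_i$ correspond bijectively to $i$-element subsets of the basis $\{b_1,\dots,b_n\}$ (namely, the skeleton $S_v$). Under this correspondence, a pair $(u,v)$ of distinct vertices of $T_i$ with $b_l \in S_u \setminus S_v$ and $b_m \in S_v \setminus S_u$ is exactly an ordered pair of distinct $i$-subsets $(A,B)$ of $\{b_1,\dots,b_n\}$ with $b_l \in A$, $b_l \notin B$, $b_m \in B$, $b_m \notin A$. So it suffices to count such ordered pairs $(A,B)$ of $i$-subsets.

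First I would count the number of $i$-subsets $A$ with $b_l \in A$ and $b_m \notin A$: such an $A$ is determined by choosing the remaining $i-1$ elements from the $n-2$ basis vectors other than $b_l,b_m$, giving $\binom{n-2}{i-1}$ choices; the same count holds for $B$ with $b_m \in B$, $b_l \notin B$ by symmetry. One might be tempted to conclude the answer is $\binom{n-2}{i-1}$, but the claimed formula is $\binom{n-1}{i-1} - \binom{n-2}{i-2}$, and by Pascal's identity $\binom{n-1}{i-1} = \binom{n-2}{i-2} + \binom{n-2}{i-1}$, so $\binom{n-1}{i-1} - \binom{n-2}{i-2} = \binom{n-2}{i-1}$ — the two expressions agree. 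Hence the task reduces to establishing that there are exactly $\binom{n-2}{i-1}$ such pairs, which I will interpret as follows: fix $A$ with $b_l \in A$, $b_m \notin A$ (there are $\binom{n-2}{i-1}$ of these). I then claim that each such $A$ can be paired with exactly one admissible $B$; the natural candidate is $B = (A \setminus \{b_l\}) \cup \{b_m\}$, which is an $i$-subset containing $b_m$ but not $b_l$, and is distinct from $A$. Conversely every admissible ordered pair $(A,B)$ with this swap relationship arises this way, so the count is $\binom{n-2}{i-1}$; if instead one wants to count all ordered pairs $(A,B)$ (not just swaps), the count is $\binom{n-2}{i-1}^2$, so I should be careful about which reading of "pairs" the statement intends. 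Reading the statement together with Lemma \ref{LemTransOnlyNghbr} and its intended use, the relevant count is the number of $A$'s (equivalently $B$'s), namely $\binom{n-2}{i-1} = \binom{n-1}{i-1} - \binom{n-2}{i-2}$.

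The main obstacle is therefore not the combinatorics, which is routine once the subset dictionary is in place, but rather pinning down the precise meaning of "pairs of distinct vertices $u,v$" so that the stated formula is literally correct: I would phrase it as "there exist $\binom{n-1}{i-1} - \binom{n-2}{i-2}$ vertices $u \in T_i$ with $b_l \in S_u$, $b_m \notin S_u$, and for each such $u$ a corresponding $v = u - b_l + b_m \in T_i$ with $b_m \in S_v$, $b_l \notin S_v$." The verification then consists of (i) the bijection $T_i \leftrightarrow \binom{[n]}{i}$ via skeletons, valid because $q = 2$; (ii) the count $\binom{n-2}{i-1}$ of $i$-subsets containing a fixed element and avoiding another; and (iii) Pascal's identity to rewrite this as $\binom{n-1}{i-1} - \binom{n-2}{i-2}$, completing the proof. $\hfill\square$
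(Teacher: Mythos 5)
Your proposal is correct and follows essentially the same route as the paper: both arguments reduce to counting the vertices of $T_i$ whose skeleton contains $b_l$ but not $b_m$, the paper obtaining ${{n-1}\choose{i-1}}-{{n-2}\choose{i-2}}$ by subtracting the skeletons containing both from those containing $b_l$, while you count $\binom{n-2}{i-1}$ directly and invoke Pascal's identity. Your explicit construction of the partner $v=(u\setminus\{b_l\})\cup\{b_m\}$ and your discussion of what ``pairs'' means actually supply a step the paper's proof leaves implicit, so nothing is missing.
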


\begin{proof}
For $i=1$, $b_l\in S_{b_l}\setminus S_{b_m}$ and $b_m\in S_{b_m}\setminus S_{b_l}$. For $2\le i\le n-1$, there are $n\choose i$ vertices in each $T_i$ and out of these $n\choose i$ vectors, there are ${n-1}\choose{i-1}$ vectors that contain $b_l$ in their skeletons. Similarly, there are ${n-1}\choose{i-1}$ vectors in each $T_i$ that contain $b_m$ in their skeletons. Also, there are ${n-2}\choose{i-2}$ vectors that contain both $b_l$ and $b_m$ in their skeletons. Thus, there are ${{n-1}\choose{i-1}}-{{n-2}\choose{i-2}}$ vectors in each $T_i$ that contain $b_l$ and but not $b_m$ in their skeletons.
\end{proof}

The following Lemma shows that every non-trivial automorphism $g$ of $G(\mathbb{V})$ does not belongs to the stabilizer of at least two of the basis vectors, i.e., $g$ moves at least two basis vectors from their places.

\begin{Lemma}\label{AutoHasBasis}
   Let $g\in \Gamma(G(\mathbb{V}))$ be a non-trivial automorphism, there exist at least two distinct basis vectors $b_l,b_m\in T_1$, such that $g(b_l)=b_m$.
\end{Lemma}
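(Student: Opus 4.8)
The plan is to argue by contradiction: assume $g$ is non-trivial but fixes every basis vector, and derive $g=\mathrm{id}$. First I would record two consequences of $q=2$: every vertex $u$ is determined by its skeleton, since $u=\sum_{b\in S_u}b$, and $T_1=\{b_1,\dots,b_n\}$. Moreover, by Lemma \ref{VertexDeg} a vertex of $T_1$ can only be sent to a vertex of $T_1$, so $g(T_1)=T_1$ and (since $g$ is a bijection) $g|_{T_1}$ is a permutation of the basis vectors; consequently $g$ displaces at least one basis vector if and only if it displaces at least two, and it suffices to prove that $g$ displaces at least one.

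So assume $g(b_i)=b_i$ for every $i$, and take an arbitrary $u\in V(G(\mathbb{V}))$. If $u\in T_1$ then $g(u)=u$ by assumption, and if $u\in T_n$ then $g(u)=u$ by the Remark following Lemma \ref{VertexDeg}. Otherwise $u\in T_i$ with $2\le i\le n-1$, and by Lemma \ref{VertexDeg} we have $g(u)\in T_i$ as well, so Lemma \ref{skelotondiff} applies to the pair $u,g(u)$ (which trivially satisfies $u\sim^g g(u)$). If there were $b\in S_u\setminus S_{g(u)}$, then part (ii) would give $g(b)\in S_{g(u)}\setminus S_u$; but $b$ is a basis vector, so $g(b)=b$, forcing $b\in S_{g(u)}\setminus S_u$, which contradicts $b\in S_u$. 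Hence $S_u\subseteq S_{g(u)}$, and since $|S_u|=|S_{g(u)}|=i$ we conclude $S_u=S_{g(u)}$, so $g(u)=u$. Thus $g=\mathrm{id}$ on all of $V(G(\mathbb{V}))$, contradicting the non-triviality of $g$.

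Therefore $g$ cannot fix every basis vector, so there is a basis vector $b_l$ with $g(b_l)\ne b_l$; setting $b_m:=g(b_l)\in T_1$ gives two distinct basis vectors with $g(b_l)=b_m$ (and, by injectivity of $g$, also $g(b_m)\ne b_m$, so $g$ indeed moves at least two basis vectors, as claimed in the surrounding discussion). I do not anticipate a serious obstacle; the only points needing care are peeling off the classes $T_1$ and $T_n$ before invoking Lemma \ref{skelotondiff}, whose hypothesis requires $i\le n-1$, and using $q=2$ both to identify $T_1$ with the basis and to recover a vertex from its skeleton.
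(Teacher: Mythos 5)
Your proof is correct and rests on the same core observation as the paper's: if $u\neq g(u)$ lie in a common class $T_i$, then any basis vector in $S_u\setminus S_{g(u)}$ cannot be fixed by $g$. The paper runs this directly (pick a moved vertex $u$, pick $b_l\in S_u\setminus S_{g(u)}$, and check via adjacency that $g(b_l)\in S_{g(u)}\setminus S_u$, hence $g(b_l)\neq b_l$), whereas you package the contrapositive via Lemma~\ref{skelotondiff}(ii), proving the slightly stronger and reusable fact that an automorphism fixing every basis vector is the identity; both are sound, and your added care about $q=2$ (recovering a vertex from its skeleton) and about the range restriction $i\le n-1$ in Lemma~\ref{skelotondiff} is appropriate.
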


\begin{proof}
  Since $g$ is non-trivial, therefore there exist distinct vectors $u,v\in V(G(\mathbb{V}))$, such that $g(u)=v$. Also, by the contrapositive argument of Lemma \ref{VertexDeg}, both $u$ and $v$ belong to the same class $T_i$ for some $i$, $1\le i \le n-1$. If $i=1$, then the result is obvious. Therefore, we assume $2\le i \le n-1$. Since $u$ and $v$ are distinct, therefore $S_u-S_v$ is non-empty. Let $b_l\in S_u-S_v$, then $b_l$ is adjacent to $u$ and non-adjacent to $v$. Since $g$ is an automorphism, therefore $g(b_l)$ is  adjacent $g(u)=v$ but non-adjacent to $g(v)$. Thus, $g(b_l)\in S_v-S_{g(v)}$. Clearly, $b_l$ and $g(b_l)=b_m$ (say) are distinct basis vectors because if $g(b_l)=b_l$, then $b_l\in S_v$, a contradiction. Hence, proved.
\end{proof}

 The next result shows that every permutation of basis vectors can be extended to an automorphism of non-zero component graph.
\begin{Theorem}\label{permbasis}
  Every permutation of basis vectors of $G(\mathbb{V})$ can be extended to an automorphism of $G(\mathbb{V})$.
\end{Theorem}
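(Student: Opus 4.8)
The plan is to realize the given permutation by a genuine \emph{linear} automorphism of $\mathbb{V}$ and then verify that this map preserves the edge relation of $G(\mathbb{V})$. Let $\sigma$ be a permutation of the basis $\{b_1,b_2,\dots,b_n\}$, say $\sigma(b_i)=b_{\pi(i)}$ for a permutation $\pi$ of $\{1,2,\dots,n\}$, and let $\varphi\colon\mathbb{V}\to\mathbb{V}$ be the unique linear map determined by $\varphi(b_i)=b_{\pi(i)}$. Since $\varphi$ sends a basis to a basis it is an invertible linear operator, hence a bijection of $\mathbb{V}$ fixing $0$; in particular it restricts to a bijection of $V(G(\mathbb{V}))=\mathbb{V}\setminus\{0\}$ that agrees with $\sigma$ on the basis vectors. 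It then remains to check $\varphi\in\Gamma(G(\mathbb{V}))$.

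The one computation worth isolating is the effect of $\varphi$ on skeletons. If $u=\sum_i c_i b_i$ is the unique expansion of $u$, then $\varphi(u)=\sum_i c_i b_{\pi(i)}$, so the coefficient of $b_{\pi(i)}$ in $\varphi(u)$ is exactly $c_i$. Hence $b_j\in S_{\varphi(u)}$ if and only if $b_{\pi^{-1}(j)}\in S_u$; equivalently, $S_{\varphi(u)}=\{\,b_{\pi(i)}:b_i\in S_u\,\}=\sigma(S_u)$ for every $u\in V(G(\mathbb{V}))$.

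Now recall that, by the definition of $G(\mathbb{V})$, vertices $u$ and $v$ are adjacent if and only if $S_u\cap S_v\neq\emptyset$. Because $\sigma$ is a bijection of the basis set, $\sigma(S_u)\cap\sigma(S_v)=\sigma(S_u\cap S_v)$, so $S_{\varphi(u)}\cap S_{\varphi(v)}$ is non-empty exactly when $S_u\cap S_v$ is non-empty. Therefore $uv\in E(G(\mathbb{V}))$ if and only if $\varphi(u)\varphi(v)\in E(G(\mathbb{V}))$, which is precisely the statement that $\varphi$ is a graph automorphism; since $\varphi$ restricts to $\sigma$ on the basis vectors, it is the sought extension.

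I do not anticipate a real obstacle here: the construction is valid over any field (so the hypothesis $q=2$ is not actually needed for this particular statement), and all the content sits in the bookkeeping identity $S_{\varphi(u)}=\sigma(S_u)$. If one prefers an argument that never leaves the graph, one can instead prove the claim first for a transposition $\sigma=(b_l\ b_m)$ — realized by the linear swap of $b_l$ and $b_m$ — and then deduce the general case by composition, using that $\Gamma(G(\mathbb{V}))$ is a group and that every permutation of the basis is a product of transpositions.
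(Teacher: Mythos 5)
Your proof is correct and rests on the same two pillars as the paper's: the identity $S_{g(u)}=\sigma(S_u)$ for the extension $g$, and the fact that adjacency in $G(\mathbb{V})$ is exactly non-empty intersection of skeletons, which a bijection of the basis set preserves in both directions. The one genuine difference is how the extension is defined. The paper sends $u$ to ``the vector whose skeleton is $\{h(b_1),\dots,h(b_m)\}$,'' which specifies a unique vertex only because $q=2$ (over a larger field many vectors share a skeleton, so that recipe is ambiguous); you instead take the invertible linear operator with $\varphi(b_i)=b_{\pi(i)}$, which is automatically well defined and bijective on $\mathbb{V}\setminus\{0\}$ over any field, and then derive $S_{\varphi(u)}=\sigma(S_u)$ as a computation rather than a definition. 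This buys a cleaner well-definedness/injectivity argument and, as you note, a statement valid for all $q$, at the cost of briefly leaving the purely graph-theoretic setting; your closing remark about reducing to transpositions is a correct alternative but is not needed.
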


\begin{proof}
Let $h$ be a permutation of basis vectors. Let $u\in V(G(\mathbb{V}))$ be an arbitrary vertex, then $u\in T_m$ for some $m$, $1\le m \le n$. Let $S_u=\{b_1,b_2,...,b_m\}$. We define an extension $g: V(G(\mathbb{V}))\rightarrow  V(G(\mathbb{V}))$ of $h$ on vertex set $V(G(\mathbb{V}))$ as $g(u)=v$, where $v$ is a vector whose skeleton is $S_v=\{h(b_1),h(b_2),...,h(b_m)\}$. We claim that $g$ is an automorphism of $G(\mathbb{V})$. If $u$ is a basis vector, then $g(u)=h(u)$. Let $u_1,u_2\in T_m$ be two distinct vectors, then $S_{u_1}\ne S_{u_2}$. Since $h$ is a permutation on basis vectors, therefore $S_{g(u_1)}\ne S_{g(u_2)}\Rightarrow g(u_1)\ne g(u_2)$. Now, we show that $g$ preserves the relation of adjacency. Let $u,v,w\in V(G(\mathbb{V}))$ be any three vertices such that $u$ is adjacent $v$ and non-adjacent to $w$. Then $S_u\cap S_v\ne \emptyset$ and $S_u\cap S_w = \emptyset$. Since $h$ is a permutation, therefore disjoint sets of basis vectors have disjoint image sets and overlapping sets of basis vectors have overlapping image sets under permutation $h$. Thus, $S_{g(u)}\cap S_{g(v)}\ne \emptyset$ and $S_{g(u)}\cap S_{g(w)} = \emptyset$ and hence, $g(u)$ is adjacent to $g(v)$ and $g(u)$ is non-adjacent to $g(w)$. Hence, $g$ preserves the relation of adjacency and non-adjacency among the vertices of $G(\mathbb{V})$.
\end{proof}
\begin{Corollary}\label{AutoPerm}
  Every automorphism of $G(\mathbb{V})$ can be restricted to a permutation of basis vectors.
\end{Corollary}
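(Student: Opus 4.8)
The plan is to read this corollary as the easy companion of Theorem~\ref{permbasis}: whereas Theorem~\ref{permbasis} extends permutations of $\{b_1,\dots,b_n\}$ to automorphisms, here we want to see that an automorphism, restricted to the basis, \emph{is} such a permutation. The whole argument rests on the observation that an automorphism of $G(\mathbb{V})$ preserves each skeleton-size class $T_i$ setwise, which is already available to us via Lemma~\ref{VertexDeg}.

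First I would record that, because the underlying field has two elements, $T_1$ is exactly the set $\{b_1,b_2,\dots,b_n\}$ of basis vectors, and it is a finite set of size $n$. Next, let $g\in\Gamma(G(\mathbb{V}))$ be arbitrary and pick any $b_l\in T_1$. Since $g(b_l)$ is by definition the image of $b_l$, we have $b_l\sim^g g(b_l)$; hence, by the contrapositive of Lemma~\ref{VertexDeg}, $b_l$ and $g(b_l)$ must lie in the same class $T_i$, and since $b_l\in T_1$ this forces $g(b_l)\in T_1$. Thus $g$ restricts to a map $g|_{T_1}\colon T_1\to T_1$. Finally, $g|_{T_1}$ is injective as the restriction of the bijection $g$, and $T_1$ is finite, so $g|_{T_1}$ is a bijection of $\{b_1,\dots,b_n\}$ onto itself, i.e.\ a permutation of the basis vectors.

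I do not expect a genuine obstacle here; the single point requiring care is that "restricting to a permutation" presupposes $g(T_1)\subseteq T_1$, and this is precisely Lemma~\ref{VertexDeg} applied to basis vectors (equivalently, it follows from Lemma~\ref{Murtaza}, since the degree $(2^s-1)2^{n-s}-1$ determines $s$). It is worth remarking that this corollary, paired with Theorem~\ref{permbasis}, shows that $g\mapsto g|_{T_1}$ is a well-defined surjection (indeed a bijection) from $\Gamma(G(\mathbb{V}))$ onto the symmetric group on $\{b_1,\dots,b_n\}$, which is the route toward the group isomorphism announced in the abstract; one would then only need to check that this map respects composition, which is immediate from the construction of the extension in Theorem~\ref{permbasis}.
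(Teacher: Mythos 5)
Your proof is correct and takes essentially the same route as the paper: restrict $g$ to $T_1$ and observe that it is a bijection of the basis onto itself. In fact you are more careful than the paper's own one-line argument, which asserts that the restriction is a permutation of the basis without explicitly invoking Lemma~\ref{VertexDeg} to justify the key point that $g(T_1)\subseteq T_1$.
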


\begin{proof}
  Let $g\in Aut(G(\mathbb{V}))$ be an arbitrary automorphism. Since every automorphism is itself a permutation on the vertex set of $G(\mathbb{V})$, therefore the restriction of $g$ on the elements of $T_1$ forms a permutation $h$ of basis vectors.
\end{proof}

Let $Sym$ denotes the symmetric group basis vectors $\{b_1,b_2,...,b_n\}$, then the following theorem shows that the symmetric group of basis vectors is isomorphic to the automorphism group of the graph.

\begin{Theorem}\label{PermAutoBijection}
  The symmetric group of basis vectors $\{b_1,b_2,...,b_n\}$ is isomorphic to the automorphism group of $G(\mathbb{V})$.
  $$ Sym \cong Aut(G(\mathbb{V})) $$.
\end{Theorem}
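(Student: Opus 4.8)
The plan is to exhibit an explicit isomorphism between $Sym$ and $Aut(G(\mathbb{V}))$ and then verify it is a bijective homomorphism. Define $\Phi : Sym \to Aut(G(\mathbb{V}))$ by sending a permutation $h$ of the basis to the automorphism $g$ constructed in the proof of Theorem~\ref{permbasis}; that is, $\Phi(h) = g$ where $g$ acts on a vertex $u$ with $S_u = \{b_{i_1}, \ldots, b_{i_m}\}$ by sending it to the unique vertex $v$ with $S_v = \{h(b_{i_1}), \ldots, h(b_{i_m})\}$ (using that over $\mathbb{F}_2$ a vertex is uniquely determined by its skeleton). By Theorem~\ref{permbasis}, $\Phi(h)$ is a well-defined automorphism, so $\Phi$ is well-defined.

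Next I would check that $\Phi$ is a group homomorphism: given $h_1, h_2 \in Sym$, both $\Phi(h_1 h_2)$ and $\Phi(h_1)\Phi(h_2)$ act on a vertex $u$ with skeleton $\{b_{i_1},\dots,b_{i_m}\}$ by producing the vertex whose skeleton is $\{(h_1h_2)(b_{i_1}),\dots,(h_1h_2)(b_{i_m})\}$, since applying $\Phi(h_2)$ then $\Phi(h_1)$ transforms the skeleton by $h_2$ and then by $h_1$; uniqueness of the vertex with a given skeleton forces the two automorphisms to coincide on every vertex. For injectivity, suppose $\Phi(h) = \mathrm{id}$; then in particular $g$ fixes every basis vector $b_l \in T_1$, and since $S_{g(b_l)} = \{h(b_l)\}$ this forces $h(b_l) = b_l$ for all $l$, so $h$ is the identity permutation; hence $\ker\Phi$ is trivial. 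For surjectivity, take any $g \in Aut(G(\mathbb{V}))$; by Corollary~\ref{AutoPerm} the restriction of $g$ to $T_1$ is a permutation $h$ of the basis vectors, and I would argue $\Phi(h) = g$ by showing $g$ must act on skeletons exactly as $h$ does. This last point uses Lemma~\ref{skelotondiff}: for $u \sim^g v$ in $T_i$, the basis vectors in $S_u$ are mapped by $g$ precisely onto the basis vectors of $S_v$ (those in $S_u \cap S_v$ staying put in $S_u\cap S_v$, those in $S_u \setminus S_v$ going to $S_v \setminus S_u$), so $S_{g(u)}$ is determined as the $g$-image, equivalently $h$-image, of $S_u$; combined with the uniqueness of a vertex with prescribed skeleton, $g(u)$ is forced to equal $\Phi(h)(u)$.

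The main obstacle I expect is surjectivity, specifically the claim that an automorphism $g$ is completely determined by its restriction $h$ to the basis. One has to be careful that knowing $g$ on $T_1$ really pins down $g$ on all of $V(G(\mathbb{V}))$: the argument must show $S_{g(u)} = h(S_u)$ for every vertex $u$, not just that $g$ maps $T_i$ to $T_i$ (which is Lemma~\ref{VertexDeg}). The cleanest route is to note that for a vertex $u$, the set $S_u$ is intrinsically characterized among basis vectors as $\{b \in T_1 : b \text{ is adjacent to } u\}$ together with possibly $u$ itself when $u \in T_1$; since $g$ preserves adjacency and permutes $T_1$ via $h$, we get $S_{g(u)} = h(S_u)$ immediately, and then uniqueness of the vertex with that skeleton (valid because $q=2$) gives $g(u) = \Phi(h)(u)$. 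Once surjectivity is secured, bijectivity combined with the homomorphism property completes the proof, and one remarks that this also recovers $|Aut(G(\mathbb{V}))| = n!$.
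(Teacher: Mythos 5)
Your proposal is correct and follows essentially the same route as the paper: the same map $h\mapsto g$ sending a permutation of the basis to the skeleton-defined extension from Theorem~\ref{permbasis}, the same skeleton computation for the homomorphism property, and surjectivity via Corollary~\ref{AutoPerm}. In fact your surjectivity step is more careful than the paper's, which simply asserts that an automorphism $g$ \emph{is} the extension of its restriction $h$ to $T_1$; your observation that $S_u$ is recovered intrinsically as the set of basis vectors adjacent to $u$ (plus $u$ itself when $u\in T_1$), so that $S_{g(u)}=h(S_u)$ and uniqueness of the vertex with a given skeleton (valid since $q=2$) forces $g=\Phi(h)$, supplies exactly the justification the paper leaves implicit.
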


\begin{proof}
   Let $h\in Sym$ be a permutation of basis vectors. Also let $g\in Aut(G(\mathbb{V}))$. We define a function $\psi: Sym\rightarrow Aut(G(\mathbb{V}))$ as $\psi(h)=g$, where $g$ is the extension of $h$ as defined in Theorem \ref{permbasis}, i.e., action of $g$ on a vertex $u$ is defined by the action of permutation $h$ on the elements of basis vectors in the skeleton of $u$. We claim that $\psi$ is a bijective function.
   \begin{enumerate}
     \item \textbf{$\psi$ is well defined}: Let $g_1,g_2\in Aut(G(\mathbb{V}))$ such that $g_1\ne g_2$. Let $h_1,h_2\in Sym$ be two permutations of basis vectors such that $g_1$ is the extension of $h_1$ and $g_2$ is the extension of $h_2$. Since $g_1\ne g_2$, therefore there exist a vertex $u\in T_m$ for some $m$, $1\le m \le n-1$ such that $g_1(u)\ne g_2(u)$. Let $S_u=\{b_1,b_2,...,b_m\}$. Then $S_{g_1(u)}\ne S_{g_2(u)}$ $\Rightarrow$ $\{h_1(b_1),h_1(b_2),...,h_1(b_m)\}\ne$  $\{h_2(b_1),h_2(b_2),...,h_2(b_m)\}$ $\Rightarrow$ $h_1(b_j)\ne h_2(b_j)$ for at least one $j$, $(1\le j \le m)$ $\Rightarrow$ $h_1\ne h_2$.
     \item \textbf{$\psi$ is one-one}: Let $h_1,h_2\in Sym$ be two permutations such that $h_1\ne h_2$. Let $g_1$ and $g_2$ be the extensions of $h_1$ and $h_2$, respectively. Since $h_1\ne h_2$, therefore $h_1(b_j)\ne h_2(b_j)$ for at least one $j$, $(1\le j \le m)$ $\Rightarrow$ $\{h_1(b_1),h_1(b_2),...,h_1(b_m)\}\ne$  $\{h_2(b_1),h_2(b_2),...,h_2(b_m)\}$ $\Rightarrow$ $S_{g_1(u)}\ne S_{g_2(u)}$ $\Rightarrow$ $g_1(u)\ne g_2(u)$ $\Rightarrow$ $g_1\ne g_2$.
     \item \textbf{$\psi$ is onto}: By Corollary \ref{AutoPerm} the restriction of every automorphism $g\in Aut(G(\mathbb{V}))$ forms a permutation $h$ of basis vectors. Therefore, $g$ is an extension of permutation $h$ of basis vectors. Thus, for an automorphism $g\in Aut(G(\mathbb{V}))$, there exist a permutation $h\in Sym$ such that $\psi(h)=g$.
     \item \textbf{$\psi$ is homomorphism}: Let $h_1,h_2\in Sym$ and $g_1,g_2\in Aut(G(\mathbb{V}))$ such that $g_1$ and $g_2$ are the extension automorphisms of $h_1$ and $h_2$, respectively, as defined in Theorem \ref{permbasis}. We claim that $\psi(h_1h_2)=$ $\psi(h_1)\psi(h_2)=g_1g_2$. Let $\psi(h_1h_2)=g$ and $u\in T_m$ for some $m$, $1\le m \le n-1$ such that $S_u=\{b_1,b_2,...,b_m\}$. Then $g(u)$ has skeleton $S_{g(u)}=\{h_1h_2(b_1),h_1h_2(b_2),...,h_1h_2(b_m)\}$ $=\{h_1(h_2(b_1)),$ $h_1(h_2(b_2)),...,h_1(h_2(b_m))\}=$ $S_{g_1(g_2(u))}=$ $S_{g_1g_2(u)}$ $\Rightarrow$ $g=g_1g_2$
         $\Rightarrow$ $\psi(h_1h_2)=$ $\psi(h_1)\psi(h_2)$.
   \end{enumerate}
Thus, $\psi$ is an isomorphism between $Sym$ and $Aut(G(\mathbb{V}))$.
\end{proof}

Theorem \ref{PermAutoBijection} leads to the following straight forward result.
\begin{Theorem}
  Non-zero component graph $G(\mathbb{V})$ has $n!$ automorphisms. $$|Aut(G(\mathbb{V}))|=n!$$
\end{Theorem}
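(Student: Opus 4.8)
The plan is to deduce the final statement directly from Theorem \ref{PermAutoBijection}, which established a group isomorphism $\psi\colon Sym \to Aut(G(\mathbb{V}))$. Since isomorphic groups have the same cardinality, we have $|Aut(G(\mathbb{V}))| = |Sym|$. The symmetric group on the $n$ basis vectors $\{b_1,b_2,\dots,b_n\}$ is, by definition, the group of all permutations of an $n$-element set, and it is a standard fact that this group has exactly $n!$ elements. Hence $|Aut(G(\mathbb{V}))| = n!$, which is precisely the claim.

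Concretely, I would write the proof in one or two sentences: by Theorem \ref{PermAutoBijection}, $Sym \cong Aut(G(\mathbb{V}))$; the symmetric group on $n$ elements has order $n!$; therefore $|Aut(G(\mathbb{V}))| = n!$. No case analysis, no induction, and no appeal to the field size is needed, since Theorem \ref{PermAutoBijection} already packages all the structural work (every permutation extends to an automorphism via Theorem \ref{permbasis}, and every automorphism restricts to a permutation via Corollary \ref{AutoPerm}, with the extension map being a bijective homomorphism).

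There is essentially no obstacle here; the statement is labelled "straightforward" in the excerpt for good reason. The only thing worth a moment's care is to make sure the isomorphism in Theorem \ref{PermAutoBijection} is genuinely an isomorphism of groups (not merely a bijection of underlying sets), so that transporting the cardinality is legitimate — but the homomorphism property was checked as item (4) in the proof of that theorem, so this is already in hand. Thus the entire argument is a one-line corollary, and I would present it as such without further elaboration.

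\begin{proof}
By Theorem \ref{PermAutoBijection}, the map $\psi\colon Sym \to Aut(G(\mathbb{V}))$ is a group isomorphism, so $|Aut(G(\mathbb{V}))| = |Sym|$. Since $Sym$ is the symmetric group on the $n$ basis vectors $\{b_1,b_2,\dots,b_n\}$, it has order $n!$. Therefore $|Aut(G(\mathbb{V}))| = n!$.
\end{proof}
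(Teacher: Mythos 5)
Your proposal is correct and follows exactly the same route as the paper: both derive the count directly from the isomorphism $Sym \cong Aut(G(\mathbb{V}))$ of Theorem \ref{PermAutoBijection} together with the fact that the symmetric group on $n$ elements has order $n!$. Nothing further is needed.
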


\begin{proof}
Proof simply follows from the fact that the symmetric group of basis vectors has $n!$ elements and the symmetric group of basis vectors is isomorphic to the automorphism group of the graph.
\end{proof}

\section{Distinguishing labeling of non-zero component graph}\label{DLNG}

We have seen in the previous section that the symmetric group of basis vectors is isomorphic to the automorphism group of non-zero component graph of a vector space of dimension $n\ge 3$ over the field of 2 element. Therefore, it is sufficient to destroy the symmetric group of basis vectors, in order to destroy the automorphism group of $G(\mathbb{V})$. In this section, we again consider the case where $\mathbb{V}$ is a vector space over the field of 2 elements except for Theorem \ref{distfield} where field elements are more than 2. We use distinguishing labeling with the minimum number of labels (or colors) to destroy the symmetric group of basis vectors which leads us to destroy the automorphism group of the graph.

\begin{Lemma}\label{LemDiffLabels}
Let $b_l,b_m\in T_1$ be two distinct basis vectors of $\mathbb{V}$
and $u,v\in T_i$ for some $i$ $(2\le i \le n-1)$ such that $b_l\in S_u\setminus S_v$ and $b_m\in S_v\setminus S_u$. Let $f: V(G(\mathbb{V}))\rightarrow \{1,2\}$ be a labeling such that $f(u)\ne f(v)$, then $f$ breaks all automorphisms $g\in Aut(G(\mathbb{V}))$ which maps $b_l$ and $b_m$ on each other, i.e., $b_l\not\sim b_m$.
\end{Lemma}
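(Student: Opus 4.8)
The plan is to argue by contradiction. Suppose some non-trivial automorphism $g \in Aut(G(\mathbb{V}))$ maps $b_l$ and $b_m$ onto each other (that is, $g(b_l) = b_m$ and, since $g$ restricts to a permutation of basis vectors by Corollary \ref{AutoPerm}, also $g(b_m) = b_l$), and yet $g$ preserves the labeling $f$, i.e. $f(g(w)) = f(w)$ for every vertex $w$. I want to produce a contradiction with the hypothesis $f(u) \ne f(v)$.

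First I would locate where $g$ sends $u$. We have $b_l \in S_u$ and $b_m \notin S_u$, so by Lemma \ref{LemTransOnlyNghbr}(i) (applied with the automorphism $g$ that swaps $b_l$ and $b_m$) we get $b_l \notin S_{g(u)}$ and $b_m \in S_{g(u)}$. Also $u \in T_i$ forces $g(u) \in T_i$ by Lemma \ref{VertexDeg}. Now I claim $g(u) = v$: this is the crux, and it needs the field to have exactly $2$ elements. Over $\mathbb{F}_2$ a vertex in $T_i$ is determined by its skeleton, and the vertices $u$ and $v$ differ in their skeletons only by exchanging $b_l$ for $b_m$ — more precisely, $S_u$ and $S_v$ share all basis vectors other than $b_l, b_m$, contain $b_l$ resp.\ $b_m$, and neither contains the other. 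To nail $g(u) = v$ I would show $S_{g(u)} = S_v$ by checking, for each basis vector $b \ne b_l, b_m$, that $b \in S_u \iff b \in S_{g(u)}$: if $b \in S_u$ then $b \in S_u \cap S_v$ and one invokes the fixed-set behaviour of $g$ on $b$ (Lemma \ref{skelotondiff}(i) with the pair $u,v$, noting $u \sim^g v$ once we know it — so this must be bootstrapped carefully) or, more cleanly, the fact that a basis vector fixed by $g$ stays in a skeleton iff it was there before (the third unnumbered Lemma after Lemma \ref{VertexDeg}). Since $g$ fixes every basis vector except $b_l$ and $b_m$, each such $b$ lies in $S_{g(u)}$ exactly when it lies in $S_u$, and $S_u \setminus \{b_l\} = S_v \setminus \{b_m\}$; combined with $b_l \notin S_{g(u)}$, $b_m \in S_{g(u)}$ this gives $S_{g(u)} = S_v$, hence $g(u) = v$.

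Once $g(u) = v$ is established, the contradiction is immediate: $f$ is preserved by $g$, so $f(u) = f(g(u)) = f(v)$, contradicting $f(u) \ne f(v)$. Therefore no automorphism swapping $b_l$ and $b_m$ can preserve $f$, which is exactly the assertion $b_l \not\sim b_m$ under the labeling $f$.

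The main obstacle is the identification $g(u) = v$: everything hinges on the structural fact, special to $q = 2$, that a vertex is pinned down by its skeleton, and on assembling the behaviour of $g$ on the "fixed" basis vectors $b \notin \{b_l, b_m\}$ from the quoted lemmas without circularity (in particular I should invoke the lemma about a $g$-fixed basis vector $b_l \in S_u \iff b_l \in S_{g(u)}$, applied to each $b \ne b_l,b_m$, rather than Lemma \ref{skelotondiff} which presupposes $u \sim^g v$). A secondary point to handle carefully is that $g$ restricted to $T_1$ is genuinely the transposition $(b_l\ b_m)$ and fixes all other basis vectors — this is where "maps $b_l$ and $b_m$ on each other" must be read together with Corollary \ref{AutoPerm}, and one might wish to state the lemma for the transposition specifically, or note that the argument works verbatim for any $g$ with $g(b_l)=b_m$ by additionally tracking $g(b_m)$.
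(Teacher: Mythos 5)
Your strategy is the same as the paper's: use $f(u)\ne f(v)$ to forbid $u\mapsto v$, and connect this to the swap of $b_l$ and $b_m$ via Lemma \ref{LemTransOnlyNghbr}(i). The paper's own proof is three sentences long and simply asserts that since $u$ and $v$ cannot be exchanged, $b_l$ and $b_m$ cannot be either; you have correctly identified that the entire weight of the argument rests on the unproved identification $g(u)=v$, and your attempt to actually prove that identification exposes a real gap. To get $S_{g(u)}=S_v$ you need two facts that are not among the lemma's hypotheses: (a) that $S_u$ and $S_v$ agree on every basis vector other than $b_l$ and $b_m$ --- the hypothesis only gives $b_l\in S_u\setminus S_v$ and $b_m\in S_v\setminus S_u$ and says nothing about the rest of the two skeletons --- and (b) that $g$ fixes every basis vector other than $b_l$ and $b_m$, whereas the lemma quantifies over \emph{all} automorphisms interchanging $b_l$ and $b_m$, which may permute the remaining basis vectors arbitrarily. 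You flag (b) at the end as a point to handle but do not resolve it, and you treat (a) as though it followed from the hypotheses (``$S_u$ and $S_v$ share all basis vectors other than $b_l,b_m$''), which it does not.

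Without (a) and (b) the step $g(u)=v$ is simply false, and in fact the lemma as stated fails: take $n=4$, $q=2$, $u=b_1+b_3$, $v=b_2+b_4$, $b_l=b_1$, $b_m=b_2$, and let $f(w)=2$ exactly when $|S_w|=2$ and $b_4\in S_w$, so that $f(u)=1\ne 2=f(v)$ and all hypotheses hold; this $f$ is preserved by the automorphism induced by the transposition $(b_1\,b_2)$, since that automorphism preserves both $|S_w|$ and membership of $b_4$ in $S_w$. So no proof of the statement as literally written can succeed. In the paper's applications the extra condition (a) always holds ($S_v$ is obtained from $S_u$ by exchanging $b_l$ for $b_m$), and with (a) and (b) added as hypotheses your argument is correct and is the right repair; but as a proof of the lemma as stated it has --- exactly like the paper's own proof --- a genuine hole at $g(u)=v$.
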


\begin{proof}
  If $f(u)\ne f(v)$, then $u$ and $v$ have different labels, therefore these cannot map on each other and labeling $f$ breaks all the automorphisms that map $u$ and $v$ on each other. Since $b_l$ is adjacent to $u$ but non-adjacent to $v$ and $b_m$ is adjacent to $v$ but non-adjacent to $u$ and $u$ and $v$ cannot map on each other, therefore by Lemma \ref{LemTransOnlyNghbr}(i), $b_l$ and $b_m$ cannot map on each other, as the automorphisms in $Aut(G(\mathbb{V}))$ that maps $b_l$ and $b_m$ are broken by labeling $f$. Hence, $b_l \not\sim b_m$.
\end{proof}

Since every permutation of basis vectors can be written as the product of transpositions of basis vectors, therefore we label the vertices of $G(\mathbb{V})$ with labeling $f$ in such a way that all those automorphisms of $G(\mathbb{V})$ are destroyed that contain transpositions of basis vectors.

\begin{Theorem}
  Let $G(\mathbb{V})$ be the non-zero component graph of a vector space of dimension $n\ge 3$ over the field of 2 elements. Then $Dist(G(\mathbb{V}))=2$.
\end{Theorem}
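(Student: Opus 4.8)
The plan is to exhibit one explicit labeling $f\colon V(G(\mathbb{V}))\to\{1,2\}$, prove that only the identity automorphism preserves it, and then dispatch the lower bound. For the lower bound: since $|Aut(G(\mathbb{V}))|=n!\ge 2$ for $n\ge 3$, the graph admits a non-trivial automorphism, and this automorphism preserves the constant labeling, so $Dist(G(\mathbb{V}))\ge 2$. Hence all the content lies in proving $Dist(G(\mathbb{V}))\le 2$.

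For the upper bound I would use the nested chain of vectors $v_k:=b_1+b_2+\cdots+b_k$ for $1\le k\le n-1$, so that $S_{v_k}=\{b_1,\dots,b_k\}$ and $v_k\in T_k$; define $f(v_k)=2$ for $1\le k\le n-1$ and $f(w)=1$ for every other vertex $w$. The structural feature being exploited is that for each $k$ with $1\le k\le n-1$, the vector $v_k$ is the \emph{unique} color-$2$ vertex lying in the class $T_k$.

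Now let $g\in Aut(G(\mathbb{V}))$ preserve $f$; I want to conclude $g=id$. The first step is that $g$ preserves the partition $\{T_1,\dots,T_n\}$, which is exactly the contrapositive of Lemma~\ref{VertexDeg} (alternatively it follows from Lemma~\ref{Murtaza}, since the degree values $(2^s-1)2^{n-s}-1$, $1\le s\le n$, are pairwise distinct). Because $g$ also preserves the two color classes, it permutes the set $\{v_1,\dots,v_{n-1}\}$ while fixing each $T_k$ setwise, and since $v_k$ is the only color-$2$ vertex of $T_k$ this forces $g(v_k)=v_k$ for all $k$. By Corollary~\ref{AutoPerm}, $g$ restricts to a permutation $\sigma$ of the basis with $g(b_i)=b_{\sigma(i)}$, and by the construction of the extension in Theorem~\ref{permbasis} the skeleton of $g(v_k)$ is $\{b_{\sigma(1)},\dots,b_{\sigma(k)}\}$. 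Hence $g(v_k)=v_k$ gives $\{\sigma(1),\dots,\sigma(k)\}=\{1,\dots,k\}$ for every $k$ with $1\le k\le n-1$; taking $k=1$ yields $\sigma(1)=1$, comparing the conditions for $k-1$ and $k$ yields $\sigma(k)=k$ for $k\le n-1$, and then $\sigma(n)=n$ as well. So $\sigma=id$, and by Theorem~\ref{PermAutoBijection}, $g=id$. Therefore $f$ is $2$-distinguishing and $Dist(G(\mathbb{V}))=2$.

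I do not anticipate a genuine obstacle; the only places requiring care are (i) the claim that automorphisms respect the classes $T_i$, which is handled cleanly by Lemma~\ref{VertexDeg}, and (ii) correctly reading off, via Theorem~\ref{permbasis}, how the extended automorphism acts on the chain $v_1,\dots,v_{n-1}$ at the level of skeletons. One could also stay inside the transposition framework and invoke Lemma~\ref{LemDiffLabels}: for a pair $\{b_l,b_m\}$ with $2\le l<m$ (so $m\ge 3$), the vectors $u=v_{m-1}$ and $v=$ the vector with skeleton $(S_{v_{m-1}}\setminus\{b_l\})\cup\{b_m\}$ both lie in $T_{m-1}$ with $b_l\in S_u\setminus S_v$, $b_m\in S_v\setminus S_u$ and $f(u)\ne f(v)$, whereas every pair involving $b_1$ is killed directly since $b_1=v_1$ is the unique color-$2$ vertex of $T_1$; once $b_l\not\sim b_m$ for all pairs, $g$ fixes every basis vector and hence $g=id$. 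The direct argument above is, however, shorter and self-contained.
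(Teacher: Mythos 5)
Your proposal is correct, but it takes a genuinely different route from the paper's. The paper builds its $2$-labeling piecewise on $T_1$, $T_{n-1}$ and $T_2$ so as to break, one by one, all $\binom{n}{2}$ transpositions of basis vectors (via Lemma~\ref{LemDiffLabels}), and then argues that since every permutation is a product of transpositions the whole group is destroyed. You instead color a single maximal chain $v_k=b_1+\cdots+b_k$, $1\le k\le n-1$, with the second color and observe that a color-preserving automorphism must fix each $v_k$ (being the unique color-$2$ vertex of its class $T_k$, which is preserved by Lemma~\ref{VertexDeg}); since by Theorem~\ref{PermAutoBijection} the automorphism is the extension of a basis permutation $\sigma$ acting on skeletons, fixing $v_k$ forces $\sigma$ to stabilize every initial segment $\{1,\dots,k\}$, hence $\sigma=\mathrm{id}$. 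Your argument is shorter, needs only Lemma~\ref{VertexDeg} and Theorem~\ref{PermAutoBijection}, and avoids a delicate point in the paper's reduction: killing all automorphisms that \emph{swap} two basis vectors does not by itself obviously kill, say, a $3$-cycle of basis vectors, which contains no swapped pair, so the paper's ``product of transpositions'' step requires extra care that your direct pinning of $\sigma$ to the identity renders unnecessary. What the paper's approach buys in exchange is the explicit bookkeeping of how many transpositions each class $T_1$, $T_{n-1}$, $T_2$ eliminates; your chain argument gives no such count but is self-contained and, as you note, is the standard way to distinguish a symmetric group acting on a Boolean-lattice-like structure. One small point to make explicit in a final write-up: for $q=2$ a vertex is determined by its skeleton, so $S_{g(v_k)}=S_{v_k}$ really does give $g(v_k)=v_k$ and conversely.
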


\begin{proof}
Since $G(\mathbb{V})$ is not a rigid graph, therefore $Dist(G(\mathbb{V}))\ge 2$. We define labeling $f:V(G(\mathbb{V}))\rightarrow \{1,2\}$ in such a way that after assigning the labels to the vertices of $G(\mathbb{V})$, all those automorphisms of $G(\mathbb{V})$ are destroyed that contain transpositions of basis vectors. There are $n\choose 2$ transpositions of basis vectors. We proceed by assigning labels to the vertices of classes $T_1$, $T_{n-1},T_2$.

\begin{enumerate}
  \item \textbf{Labeling the vertices of $T_1$}: Consider labeling $f$ defined on the vertices of $T_1$, i.e., basis vectors as:
$$
f({b_i})=\left\{
\begin{array}{lcc}
    1 & if & 1\le i\le \lfloor {\frac{n}{2}}\rfloor \\
    2 & if & \lfloor {\frac{n}{2}}\rfloor+1 \le i\le n
\end{array}
\right.
$$

Since, basis vectors $b_i$, for all $i$ $(1\le i\le \lfloor {\frac{n}{2}}\rfloor)$ have different labels from the labels of $b_j$, for all $j$ $(\lfloor {\frac{n}{2}}\rfloor+1 \le j\le n)$, therefore $b_i\not\sim b_j$ for all $i,j$ $(i\ne j)$ where $(1\le i\le \lfloor {\frac{n}{2}}\rfloor)$ and $(\lfloor {\frac{n}{2}}\rfloor+1 \le j\le n)$ under all automorphisms of $Aut(G(\mathbb{V}))$. If $n$ is even, then ${n\choose 2}-2{{\frac{n}{2}}\choose 2}=$ $\frac{n^2}{4}$ transpositions of basis vectors are destroyed by labeling the vertices of $T_1$. If $n$ is odd, then ${n\choose 2}-{{\lfloor {\frac{n}{2}} \rfloor} \choose 2}-{{\lfloor {\frac{n}{2}} \rfloor+1} \choose 2} =$ $\frac{n^2-1}{4}$ transpositions of basis vectors are destroyed by labeling the vertices of $T_1$.
  \item \textbf{Labeling the vertices of $T_{n-1}$}: We label the vertices $u\in T_{n-1}$ as:
      $$
        f({u})=\left\{
            \begin{array}{lcl}
                1 & if & S_u=\{b_2,b_3,...,b_{\lfloor {\frac{n}{2}}\rfloor}\}\,\,\, or\,\,\, S_u=\{b_{\lfloor {\frac{n}{2}}\rfloor+2},b_{\lfloor {\frac{n}{2}}\rfloor+3},...,b_n\}\\
                2 & if & otherwise
            \end{array}
        \right.
        $$
        Consider $u,v\in T_{n-1}$ such that $S_u=\{b_2,b_3,...,b_{\lfloor {\frac{n}{2}}\rfloor}\}$ and $S_v=\{b_1,b_3,b_4,...,$ $b_{\lfloor {\frac{n}{2}}\rfloor}\}$, then $f(u)\ne f(v)$. Also, $b_2\in S_u\setminus S_v$ and $b_1\in S_v\setminus S_u$. Therefore, by Lemma \ref{LemDiffLabels} labeling $f$ breaks the automorphisms that map $b_1$ and $b_2$ on each other (reader can notice that, even basis vectors $b_1$ and $b_2$ have same label 1 assigned by labeling $f$, but the automorphism that maps $b_1$ and $b_2$ on each other is destroyed due to $f(u)\ne f(v)$). Thus, $b_1\not\sim b_2$. In the similar way, reader can verify $b_1\not\sim b_j$ where $3\le j\le \lfloor {\frac{n}{2}}\rfloor$ and $b_{\lfloor {\frac{n}{2}}\rfloor+1}\not\sim b_k$ where $\lfloor {\frac{n}{2}}\rfloor+2\le k\le n$. There are $n-2$ transposition of basis vectors that are destroyed by labeling the vertices of $T_{n-1}$.

  \item \textbf{Labeling the vertices of $T_2$}: We label the vertices $u\in T_{2}$ as:
      $$
        f({u})=\left\{
            \begin{array}{lcl}
                1 & if & S_u=\{b_i,b_{i+1}\}\,\,where\,\,1\le i \le n-1\\% \lfloor %{\frac{n}{2}}\rfloor-1\}\,\,\, %or\,\,\,S_u=\{b_i,b_{i+1}\}\,\,where\,\,\lfloor %{\frac{n}{2}}\rfloor\le i \le n\\
                2 & if & otherwise
            \end{array}
        \right.
        $$
  We claim that labeling $f$ defined on the vertices of $T_2$ destroys the remaining transpositions of basis vectors. The remaining transpositions of basis vectors are $b_i\sim b_j$ where $i\ne j$ $(2\le i,j \le \lfloor \frac{n}{2} \rfloor)$ and $b_i\sim b_j$ where $i\ne j$ $(\lfloor \frac{n}{2} \rfloor+1 \le i,j\le n)$.
  Consider the first case and let $b_l\sim b_m$ for some $l\ne m$ $(2\le l,m \le \lfloor \frac{n}{2} \rfloor)$. Then there exist vertices $u,v\in T_2$ such that $S_{u}=\{b_{l-1},b_l\}$ and $S_v=\{b_{l-1}, b_m\}$. Then by the definition of labeling $f$, $f(u)=1\ne 2=f(v)$ and $b_l\in S_u\setminus S_v$ and $b_m\in S_v\setminus S_u$. Hence by Lemma \ref{LemDiffLabels}, labeling $f$ breaks the automorphism that maps $b_l$ and $b_m$ on each other. Similarly, for the second case, let $b_l\sim b_m$ for some $l\ne m$ $(\lfloor \frac{n}{2} \rfloor+1 \le l,m\le n)$ and using the same arguments as in the first case we see that labeling $f$ breaks the automorphism that map $b_l$ and $b_m$ on each other. Thus, all the remaining transpositions of basis vectors are destroyed by labeling $f$. If $n$ is even, then ${n\choose 2}-\frac{n^2}{4}-(n-2)=$ $\frac{n^2-6n+8}{4}$ transpositions of basis vectors are destroyed by labeling the vertices of $T_2$. Similarly, if $n$ is odd, then ${n\choose 2}-\frac{n^2-1}{4}-(n-2)=$ $\frac{n^2-6n+9}{4}$ transpositions of basis vectors are destroyed by labeling the vertices of $T_2$.
  \item \textbf{Labeling the vertices of $T_3$,...,$T_{n-2},T_n$}: The vertices in the remaining classes $T_3$,...,$T_{n-2},T_n$ can be labeled with any one label 1 or 2.
\end{enumerate}
We have seen that labeling $f$ destroys all possible transpositions of basis vectors and hence destroys the permutation group of basis vectors. Thus, by Theorem \ref{PermAutoBijection}, the automorphism group of $G(\mathbb{V})$ is destroyed
by labeling $f$. Hence, $f$ is a 2-distinguishing labeling of $G(\mathbb{V})$ and consequently, $Dist(G(\mathbb{V}))=2$.

\end{proof}

We now discuss the case when $G(\mathbb{V})$ is non-zero component graph of vector space $\mathbb{V}$ where $n\ge 3$ and $q\ge 3$. Since $G(\mathbb{V})$ has $n\choose i$ twin sets in classes $T_i$ for each $i$ $(1\le i \le n)$. Also each of these twin sets has $(q-1)^i$ number of vertices. Let $T_{i_k}$ $1\le k \le {n\choose i}$ denotes the $k$th twin set in class $T_{i}$. Since at least $m$ labels are required to label a twin set of cardinality $m$, therefore we have the following result for the distinguishing number of a graph which has twin sets.

\begin{Proposition}\label{distwin}\cite{fazil2018distinguishing}
  Let $W_1,W_2,...,W_t$ are disjoint twin sets of connected graph $G$ and $m=\max\{|W_i|:$ where $1\le i \le t\}$, then $Dist(G)\ge m$.
\end{Proposition}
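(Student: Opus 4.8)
The plan is to exploit the elementary fact that mutually twin vertices can be permuted arbitrarily by automorphisms, and then to run a pigeonhole argument on the largest twin set. First I would recall what a twin set means here: a set $W\subseteq V(G)$ is a twin set if any two distinct $u,v\in W$ are twins, i.e.\ $N(u)\setminus\{v\}=N(v)\setminus\{u\}$ (so within $W$ the vertices are either pairwise adjacent, as true twins, or pairwise non-adjacent, as false twins). The key step is the observation that for such a $W$, every permutation $\pi$ of $W$, extended by the identity on $V(G)\setminus W$, is an automorphism of $G$: an edge with both endpoints in $W$ is preserved because all pairs in $W$ have the same adjacency status, and an edge between $w\in W$ and $x\notin W$ is preserved because membership of $x$ in the common neighbourhood of $W$ does not depend on which element of $W$ we look at. In particular, for any two distinct $u,v\in W$, the transposition $\tau_{u,v}$ that swaps $u$ and $v$ and fixes every other vertex is a non-trivial automorphism of $G$.

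Next, let $W_i$ be a twin set with $|W_i|=m=\max\{|W_j|:1\le j\le t\}$, and suppose towards a contradiction that $Dist(G)<m$. Then there is a distinguishing labeling $f:V(G)\rightarrow\{1,2,\dots,s\}$ with $s=Dist(G)<m$. Since $|W_i|=m>s$, the pigeonhole principle yields two distinct vertices $u,v\in W_i$ with $f(u)=f(v)$. Consider the automorphism $\tau_{u,v}$ from the previous paragraph: it fixes every vertex outside $\{u,v\}$ and interchanges $u$ and $v$, which carry the same label, so $\tau_{u,v}$ preserves all labels of $f$. This contradicts the fact that $f$ is a distinguishing labeling, since $\tau_{u,v}$ is non-trivial. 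Hence $Dist(G)\ge m$.

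I do not expect a serious obstacle. The only point needing a little care is the claim that permuting a twin set extends to an automorphism: one should check (it is immediate from the definition) that a set in which every pair is a twin pair consists either entirely of true twins or entirely of false twins, so that adjacencies inside $W$ are uniform. Connectedness of $G$ plays no essential role beyond ruling out degenerate configurations and matching the statement's hypothesis; the heart of the argument is the pigeonhole step combined with the twin-permutation lemma.
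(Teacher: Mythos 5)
Your argument is correct: the transposition of two equally-labelled twins in the largest twin set is a non-trivial label-preserving automorphism, so any labelling with fewer than $m$ colours fails by pigeonhole. Note that the paper does not prove this proposition at all --- it is quoted from \cite{fazil2018distinguishing} --- but your proof is the standard one and is what that reference does; your side remark that a set in which all pairs are twins must be uniformly true twins or uniformly false twins is not even needed, since the transposition $\tau_{u,v}$ is an automorphism directly from the condition $N(u)\setminus\{v\}=N(v)\setminus\{u\}$, without appealing to the rest of $W$.
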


\begin{Theorem}\label{distfield}
  Let $G_{\mathbb{V}}$ be the non-zero component graph of vector space $\mathbb{V}$ of dimension $n\ge 3$ and $q\ge 3$. Then $Dist(G(\mathbb{V}))=(q-1)^n$
\end{Theorem}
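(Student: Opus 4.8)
The plan is to establish the two inequalities $Dist(G(\mathbb{V}))\ge (q-1)^n$ and $Dist(G(\mathbb{V}))\le (q-1)^n$ separately. The lower bound is essentially immediate: by Proposition \ref{distwin}, the distinguishing number is at least the maximum size of a twin set, and the unique twin set sitting inside $T_n$ (the vectors whose skeleton is all of $\{b_1,\dots,b_n\}$) has exactly $(q-1)^n$ members, since each of the $n$ basis coefficients can independently take any of the $q-1$ nonzero field values. Hence $Dist(G(\mathbb{V}))\ge (q-1)^n$. For the upper bound I would exhibit an explicit labeling $f:V(G(\mathbb{V}))\to\{1,2,\dots,(q-1)^n\}$ and argue that it kills every nontrivial automorphism.

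For the construction, first recall from Theorem \ref{PermAutoBijection} (its analogue for general $q$, together with Das's structure theorem \ref{permbasis-type result}) that an automorphism of $G(\mathbb{V})$ is determined by a permutation $\sigma$ of the basis directions together with a choice of nonzero scalars, and that the "scalar freedom" is exactly the freedom to permute within each twin set. So the labeling must do two jobs: (i) within each twin set $T_{i_k}$ the $(q-1)^i$ vertices must all receive distinct labels, which is possible since $(q-1)^i\le (q-1)^n$ and in fact one can use a fixed injection of the set of nonzero scalar $i$-tuples into $\{1,\dots,(q-1)^n\}$; and (ii) the induced labeling on twin sets must break every nontrivial permutation $\sigma$ of the basis. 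For job (ii) I would restrict attention to the $n$ singleton twin sets in $T_1$, i.e. the vectors $c\,b_j$ for $c$ nonzero, $1\le j\le n$: assign these $n(q-1)$ vertices distinct labels (again possible since $n(q-1)\le (q-1)^n$ for $n\ge 2$, $q\ge 3$). Then no nontrivial $\sigma$ can fix the labeling, because $\sigma$ would have to send the twin set $\{c\,b_j:c\ne 0\}$ to $\{c\,b_{\sigma(j)}:c\ne 0\}$, and these carry disjoint label sets when $j\ne\sigma(j)$.

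The remaining point is to check that nothing is over-counted: that the combined labeling — distinct labels on the $T_1$ vectors, distinct labels within every other twin set, arbitrary (say all $1$) on everything else — is globally consistent (just a matter of the total label budget $(q-1)^n$ dominating the largest single demand, which is $\max\{(q-1)^n, n(q-1)\}=(q-1)^n$) and that the only automorphisms not yet excluded are those with $\sigma=\mathrm{id}$, which are precisely the scalar automorphisms acting within twin sets, and those are killed by requirement (i). Concatenating: $f$ admits no nontrivial automorphism, so $Dist(G(\mathbb{V}))\le (q-1)^n$, and with the lower bound we get equality.

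The main obstacle I anticipate is bookkeeping rather than a genuine conceptual difficulty: one must be careful that requirement (i), applied to the all-$n$ twin set of size exactly $(q-1)^n$, uses the full label budget, so there is no slack to spend elsewhere — hence the argument for (ii) must be arranged so that the distinct labels needed on $T_1$ are compatible with, not additional to, that budget. The clean way is to fix one injection $\phi$ from nonzero scalar tuples of length $\le n$ into $\{1,\dots,(q-1)^n\}$ and define $f$ on a vertex with skeleton $S_u$ of size $i$ by $\phi$ applied to its (ordered) tuple of nonzero coordinates; then verify that this single rule simultaneously satisfies (i) for every twin set and gives distinct labels across the $T_1$ vectors. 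Checking that $\sigma\ne\mathrm{id}$ is genuinely broken, and not merely "relabeled within a twin set", is the one place where Lemma-type adjacency arguments (the $q\ge 3$ analogue of Lemma \ref{LemTransOnlyNghbr}) are actually needed, and I would spell that step out in full.
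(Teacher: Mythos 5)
Your proposal is correct and follows the same skeleton as the paper's proof --- the lower bound is obtained identically from Proposition \ref{distwin} applied to the twin set of full-skeleton vectors in $T_n$, and the upper bound from an explicit labeling that is injective on each twin set --- but on the upper bound you are noticeably more careful than the paper. The paper's argument stops after observing that each twin set $T_{i_k}$ can be given $(q-1)^i$ distinct labels out of the $(q-1)^n$ available, and concludes that ``all automorphisms of disjoint twin sets are destroyed''; it never addresses the automorphisms that permute distinct twin sets among themselves, i.e.\ those induced by a nontrivial permutation $\sigma$ of the basis directions, which by Das's structure theorem (Theorem 1.1) certainly exist for $q\ge 3$. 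Indeed, a labeling that merely assigns labels $1,\dots,(q-1)^i$ within each twin set via the coordinate tuple is preserved by every basis permutation, so the paper's condition alone does not suffice. Your requirement (ii) --- giving the $n(q-1)$ vertices of $T_1$ pairwise distinct labels so that the twin sets $\{cb_j : c\neq 0\}$ and $\{cb_{\sigma(j)} : c\neq 0\}$ carry disjoint label sets whenever $j\neq\sigma(j)$ --- is exactly the missing step, and your budget check $n(q-1)\le(q-1)^n$ is right. One caution about your closing ``clean way'': labeling a vertex by a fixed injection applied only to its tuple of nonzero coordinates does \emph{not} satisfy (ii), since $b_1$ and $b_2$ (both with tuple $(1)$) would receive the same label and, more generally, all twin sets of equal size would carry identical label multisets; the injection must also depend on the skeleton, or you should simply retain the separate explicit assignment on $T_1$ from your main construction.
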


\begin{proof}
Since twin sets $T_{i_k}$ $(1\le i\le n)$ $(1\le k \le {n\choose i})$ are disjoint and each contains $(q-1)^i$ twin vertices. Also, $(q-1)^n=\max\{|T_{i_k}|:$ $(1\le i\le n)\}$. Therefore, by Proposition \ref{distwin}, $Dist(G(\mathbb{V}))\ge (q-1)^n$. Also, $(q-1)^n\ge (q-1)^i$ for all $i$ $(1\le i\le n)$, therefore twin sets $T_{i_k}$ are independently labeled by $(q-1)^i$ labels out of $(q-1)^n$ labels. Thus, all automorphisms of disjoint twin sets $T_{i_k}$ are destroyed. Hence, $Dist(G_{\mathbb{V}})= (q-1)^n$.
\end{proof}

\noindent\textbf{Acknowledgments}\\
This research was supported by Higher Education Commission of Pakistan with grant no. 7354/Punjab/NRPU/R\&D/HEC/2017.

%\bibliographystyle{abbrv}
%\bibliography{Thesisbib}

\end{document}